\newtheorem{thm}{Theorem}[section]
\newtheorem{lem}[thm]{Lemma}
\newtheorem{fac}[thm]{Fact}
\def\N{{\mathbb N}}
\def\M{\mathrm{M}}
\def\H{\mathcal{H}}
\def\L{\mathcal{L}}
\def\R{\mathcal{R}}
\def\D{\mathcal{D}}
\def\T{\mathcal{T}}
\def\PT{\mathcal{PT}}
\def\rr{\mathbf{r}}
\def\ww{\mathbf{w}}
\def\LL{\mathbf{\Lambda}}
\def\II{\mathbf{I}}
\def\es{\varnothing}
\def\st{^\ast}
\def\ol#1{\overline{#1}}
\def\ig#1{\mathsf{IG}(#1)}
\def\pre{\,|\,}
\renewcommand\leq{\leqslant}
\renewcommand\geq{\geqslant}
\DeclareMathOperator\im{im} \DeclareMathOperator\kr{ker} \DeclareMathOperator\dom{dom} \DeclareMathOperator\fix{fix}
\begin{document}

\title{A note on free idempotent generated semigroups \\ over the full monoid of partial transformations}

\author{\textbf{Igor Dolinka}\\
\small Department of Mathematics and Informatics, University of Novi Sad,\\[-.33mm]
\small Trg Dositeja Obradovi\'ca 4, 21000 Novi Sad, Serbia\\[-.33mm]
\small E-mail: dockie@dmi.uns.ac.rs}

\date{}

\maketitle

\begin{abstract}
\noindent Recently, Gray and Ru\v skuc proved that if $e$ is a rank $k$ idempotent transformation of the set
$\{1,\dots,n\}$ to itself and $k\leq n-2$, then the maximal subgroup of the free idempotent generated semigroup over
the full transformation monoid $\T_n$ containing $e$ is isomorphic to the symmetric group $\mathcal{S}_k$. We prove
that the same holds when $\T_n$ is replaced by $\PT_n$, the full monoid of partial transformations on $\{1,\dots,n\}$.
\medskip

\noindent\textit{Key Words:} Free idempotent generated semigroup; Maximal subgroup; Partial transformation.\medskip

\noindent\textit{2010 Mathematics Subject Classification:} 20M05; 20M20; 20F05.
\end{abstract}

\section{Introduction}

For a semigroup $S$, let $E=E(S)$ be the set of its idempotents. Then $E$ can be endowed with a structure of a partial
algebra by considering the products $ef,fe$ inherited from $S$, defined for $e,f\in E$ such that
$\{ef,fe\}\cap\{e,f\}\neq\es$. (Note that $ef\in\{e,f\}$ implies that $fe$ is an idempotent, and the same is true if
one switches the roles of $e,f$ in the latter statement.) Such products are called \emph{basic products}, and the
corresponding (unordered) pairs of idempotents are \emph{basic pairs}.

The \emph{free idempotent generated semigroup over $E$} (one can also say `over $S$' when $E=E(S)$) is defined by the
following presentation:
$$\ig{E} = \langle E\pre e\cdot f=ef\text{ such that }\{e,f\}\text{ is a basic pair}\,\rangle .$$
Here $ef$ denotes the product of $e$ and $f$ in $S$ (which is again an idempotent of $S$ and thus an element of $E$),
while $\cdot$ stands for the concatenation operation in the free semigroup $E^+$ (also to be interpreted as the
multiplication in its quotient $\ig{E}$). The term `free' is justified by the fact (see property (E1) below) that
$\ig{E}$ is the universal object in the category of all idempotent generated semigroups whose partial algebras of
idempotents are isomorphic to $E$.

It was conjectured in \cite{Nam} that any maximal subgroup of any semigroup of the form $\ig{E}$ is a free group; this
was exemplified by some early results, such as \cite{McE,NP,P1,P2}. However, Brittenham, Margolis and Meakin
\cite{BMM1} recently came up with a counterexample: they constructed a certain 72-element semigroup yielding a free
idempotent generated semigroup with a maximal subgroup isomorphic to $\mathbb{Z}\oplus\mathbb{Z}$, the rank 2 free
abelian group (a simpler example with the same maximal subgroup was given later by the author in \cite{D}, based on a
20-element regular band). In fact, as shown by Gray and Ru\v skuc in \cite{GR1}, \emph{every} group is isomorphic to a
maximal subgroup of $\ig{E(S)}$ for a suitably chosen $S$; if the `target' group is finitely presented, even a finite
semigroup $S$ suffices to achieve this goal.

Therefore, it appears as a quite natural problem to study maximal subgroups of free idempotent generated semigroups
$\ig{E(S)}$ for semigroups $S$ that `occur in nature', i.e.\ for the most classical, textbook examples of semigroups.
In \cite{BMM2}, Brittenham, Margolis and Meakin proved that if $Q$ is a division ring and $S=\M_n(Q)$, the full monoid
of $n\times n$ matrices ($n\geq 3$) over $Q$, then the maximal subgroup of $\ig{E(S)}$ whose identity element is a rank
1 idempotent matrix is isomorphic to $Q^\ast$, the multiplicative group of $Q$. In addition, it was explained in
\cite{BMM2} that the study of maximal subgroups of semigroups of the form $\ig{E}$ has much to do with algebraic
topology: these subgroups are in fact fundamental groups of connected components of a certain complex (called the
\emph{Graham-Houghton complex}) associated to $E$. As it turns out, the maximal subgroup of $\ig{E(S)}$ corresponding
to a $\D$-class $D$ of the subsemigroup $S'=\langle E(S)\rangle$ of $S$ ($\D$-classes of $S'$ and $\ig{E(S)}$ are in a
bijective correspondence) is isomorphic to the (up to isomorphism unique) maximal subgroup of $S$ contained in $D$ if
and only if the universal connected cover of the component of the Graham-Houghton complex corresponding to $D$ is
simply connected. This led the authors of \cite{BMM2} to conjecture that the maximal subgroup of the free idempotent
generated semigroup over $\M_n(Q)$, $n\geq 3$, corresponding to the $\D$-class of rank $k$ matrices, is isomorphic to
the general linear group $\mathrm{GL}_k(Q)$ if $k\leq n/2$ (and perhaps even if $k\leq n-2$). Quite recently, Gray and
Ru\v skuc \cite{GR2} proved the analogue of this conjecture for $\T_n$, the full transformation monoid on an
$n$-element set.

\begin{fac}[Main Theorem of \cite{GR2}]\label{GR}
Let $e$ be an idempotent transformation of the set $\{1,\dots,n\}$ such that $|\im(e)|=k\leq n-2$. Then the maximal
subgroup of $\ig{E(\T_n)}$ containing $e$ is isomorphic to the symmetric group $\mathcal{S}_k$.
\end{fac}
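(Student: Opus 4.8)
The plan is to exploit the canonical surjection together with the known presentation of maximal subgroups of free idempotent generated semigroups in terms of the underlying biordered set. Write $G=\mathcal{S}_k$ for the (up to isomorphism unique) maximal subgroup of $\T_n$ contained in the $\D$-class $D=D_k$ of rank $k$ idempotents. The natural homomorphism $\ig{E(\T_n)}\to\langle E(\T_n)\rangle\subseteq\T_n$ restricts to a surjective homomorphism $\phi:H_e\twoheadrightarrow G$ from the maximal subgroup $H_e$ of $\ig{E(\T_n)}$ containing $e$ onto $G$; this map is always onto because every element of $G$ is a product of idempotents from $D$. Hence the entire problem reduces to proving that $\phi$ is injective, i.e.\ that $H_e$ is no larger than $\mathcal{S}_k$. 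To control $H_e$ I would use the presentation of maximal subgroups of $\ig{E}$ (in the style of Brittenham--Margolis--Meakin and Gray--Ru\v{s}kuc): its generators are indexed by the group $\H$-classes of $D$, and its defining relations split into a spanning-tree normalization (trivializing the generators along a chosen tree in the Graham--Houghton graph) together with one relation for each \emph{singular square} of idempotents in $D$.

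First I would record the structure of $D=D_k$. Its $\R$-classes are indexed by the kernels, i.e.\ the partitions of $\{1,\dots,n\}$ into exactly $k$ blocks; its $\L$-classes are indexed by the images, i.e.\ the $k$-element subsets $B\subseteq\{1,\dots,n\}$; and the $\H$-class $H_{\mathcal{P},B}$ is a group precisely when $B$ is a transversal of the kernel $\mathcal{P}$, in which case it is a copy of $\mathcal{S}_k$ permuting the $k$ blocks. Fixing $e$ to have image $B_0$, the target $\mathcal{S}_k$ is realised on $B_0$. The essential combinatorial input is a usable description of the singular squares of $D$: a square sitting on two kernels and two images is singular when the two images differ in a single point, or the two kernels differ only by reassigning one point between two blocks, provided one can exhibit a rank $k$ idempotent witnessing the collapse. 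Producing these witnesses is precisely where $k\leq n-2$ enters, since having at least two points outside the current image gives the room to construct the required idempotents.

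With this in hand I would carry out the reduction in two stages. In the first stage I use the ``elementary'' singular squares (those realising a minimal change of image or kernel) together with the tree normalization to perform Tietze transformations collapsing the large generating set down to a small family $\{t_{rs}\}$ whose images under $\phi$ are the transpositions $(r\ s)$ of points $r,s\in B_0$; since the transpositions generate $\mathcal{S}_k$ and $\phi$ is onto, this family generates $H_e$. In the second stage I verify, square by square, that the singular-square relations force exactly the defining relations of the symmetric group on $\{t_{rs}\}$: the involution relations $t_{rs}^2=1$, the commuting relations $t_{rs}t_{uv}=t_{uv}t_{rs}$ for disjoint pairs, and the braid relations $t_{rs}t_{su}t_{rs}=t_{su}t_{rs}t_{su}$. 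As these constitute a (Coxeter-type) presentation of $\mathcal{S}_k$ and $\phi$ sends each $t_{rs}$ to the corresponding transposition, the surjection $\phi$ must be an isomorphism, giving $H_e\cong\mathcal{S}_k$.

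The main obstacle is the second stage: one must locate, for each required group relation, an explicit singular square (or a short chain of them) in $D$ that yields it, while simultaneously checking that no singular square imposes anything \emph{beyond} the symmetric-group relations. This is a delicate bookkeeping problem inside the biordered set of $\T_n$, and it is exactly here that $k\leq n-2$ is indispensable: the involution and braid relations require witnessing idempotents whose images and kernels use points outside $B_0$, and the two spare points of $\{1,\dots,n\}\setminus B_0$ are both necessary and sufficient to realise them. The degenerate cases $k=n-1$ (and $k=n$) lack this room, consistent with the restriction in the statement.
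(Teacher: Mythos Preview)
The statement you are trying to prove is not proved in the paper at all: it is recorded as Fact~\ref{GR}, the main theorem of Gray and Ru\v{s}kuc \cite{GR2}, and the present paper uses it as a black box in the proof of Theorem~\ref{main}. So there is no ``paper's own proof'' to compare against; your proposal is really a sketch of the argument of \cite{GR2}, not of anything in this note.

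That said, your outline is broadly aligned with the strategy of \cite{GR2}: one uses the presentation recalled in Subsection~\ref{pres} (generators $X_{i\lambda}$ indexed by group $\H$-classes, relations of types (1)--(3)), identifies enough singular squares in $D_k$ to perform Tietze reductions down to Coxeter-type generators, and then verifies the symmetric-group defining relations. Two remarks on your write-up. First, your description of singular squares (``the two images differ in a single point, or the two kernels differ only by reassigning one point'') is a useful heuristic for the \emph{sufficient} supply of singular squares one actually uses, but it is not a characterization; be careful not to present it as one. Second, your concern about ``checking that no singular square imposes anything \emph{beyond} the symmetric-group relations'' is misplaced: once you have the surjection $\phi:H_e\twoheadrightarrow\mathcal{S}_k$ (which follows from property~(E3) together with Howie's theorem that $\langle E(\T_n)\rangle$ contains all singular transformations), any additional relations could only make $H_e$ a proper quotient of $\mathcal{S}_k$, which the surjection already rules out. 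The hard direction is solely to exhibit \emph{enough} singular-square relations to force the Coxeter presentation, and that is precisely the lengthy case analysis carried out in \cite{GR2}.
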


In this short note we show that the above result still holds true when $\T_n$ is replaced by $\PT_n$, the monoid of all
partial transformations of an $n$-element set, while $e$ is an idempotent partial transformation of $\{1,\dots,n\}$
whose rank (image size) is $k\leq n-2$. In the next section we review the necessary facts about the structure of
$\D$-classes of $\T_n$ and $\PT_n$, and recall a general method for obtaining presentations of maximal subgroups of
free idempotent generated semigroups. The results themselves are presented in the final section. Throughout the note we
assume familiarity with basic semigroup theory (in particular with Green's relations) that can be found e.g.\ in the
first sections of \cite{How}.

\section{Preliminaries}

\subsection{Green's Relations in $\T_n$ and $\PT_n$}

Throughout the note we compose (partial) functions from left to right and write them to the right of their arguments
(in accordance with \cite{GR2}), so that if $\alpha,\beta$ are partial transformations of the set $\N_n=\{1,\dots,n\}$
then $x(\alpha\beta)=(x\alpha)\beta$ for any $x\in\N_n$ for which both sides of the latter equality are defined. The
domain of the partial transformation $\alpha$ is denoted by $\dom(\alpha)$ and its image is $\im(\alpha)$. Further,
$\alpha$ defines a partition of $\dom(\alpha)\subseteq\N_n$, that is, a symmetric and transitive relation $\kr(\alpha)$
on $\N_n$ defined by $(x,y)\in\kr(\alpha)$ if and only if $x,y\in\dom(\alpha)$ and $\alpha(x)=\alpha(y)$. The size of
the image of $\alpha$, $|\im(\alpha)|$, is often referred to as the \emph{rank} of $\alpha$.

Detailed information on the structure of monoids $\T_n$ and $\PT_n$ is provided e.g.\ in the monograph of Ganyushkin
and Mazorchuk \cite{GM}. In particular, for any (partial) transformations $\alpha,\beta$ of $\N_n$ the following hold
both in $\T_n$ and, \emph{mutatis mutandis}, in $\PT_n$:
\begin{itemize}
\item $\alpha\,\D\,\beta$ if and only if $\alpha$ and $\beta$ have the same rank. Therefore, the $\D$-classes form a
chain of length $n$ in $\T_n$ and of length $n+1$ in $\PT_n$ (the extra $\D$-class in the latter is the lowest one,
containing the empty partial map). In both cases, the permutations of $\N_n$ form the top class, the group of units of
both the considered monoids.
\item $\alpha\,\L\,\beta$ if and only if $\im(\alpha)=\im(\beta)$ (since the composition of functions introduced here
is dual to the one defined in \cite{GM}).
\item $\alpha\,\R\,\beta$ if and only if $\kr(\alpha)=\kr(\beta)$, which includes the condition
$\dom(\alpha)=\dom(\beta)$.
\item $\alpha$ is an idempotent if and only $\im(\alpha)$ coincides with $\fix(\alpha)$, the set of fixed points of
$\alpha$. Consequently, $H_\alpha$, the $\H$-class of $\alpha$, is a group if and only if $\im(\alpha)$ is a
transversal of $\kr(\alpha)$. In that case, the identity element of $H_\alpha$ is the idempotent partial function
mapping each element from a given class of $\kr(\alpha)$ to the unique element of $\im(\alpha)$ lying in that class.
\end{itemize}

In the sequel, let $D_k$ denote the $\D$-class of $\PT_n$ comprising all rank $k$ partial transformations ($0\leq k\leq
n$), and let $D'_k=D_k\cap\T_n$ be the corresponding $\D$-class of $\T_n$ if $k\geq 1$. The $\R$-classes contained in
$D_k$ can be therefore indexed by partitions of subsets of $\N_n$ of cardinality $\geq k$ into $k$ classes (whence such
partitions of the whole $\N_n$ correspond to $\R$-classes contained in $D'_k$), while the $\L$-classes of $D_k$ (and of
$D'_k$) are indexed by subsets of $\N_n$ of cardinality $k$.  For example, a nice visual representation of $\D$-classes
of $\PT_3$ is given on p.60 of \cite{GM}.

\subsection{A Presentation for a Maximal Subgroup of $\ig{E}$}\label{pres}

Let $S$ be a monoid, $E=E(S)$, and $e\in E$; furthermore, let $D$ be the $\D$-class of $e$. In this subsection we
recall a procedure to obtain a presentation for $G_e$, the maximal subgroup of $\ig{E}$ containing $e$. For the case
when $S$ is regular this has been already known from \cite{Nam}, while the general case was deduced in \cite{GR1} from
the rewriting technique given by Ru\v skuc in \cite{R-JA}, which was, in turn, inspired by the classical
Reidemeister-Schreier method from combinatorial group theory. Bearing in mind the semigroups to which we apply these
general results, it is convenient to assume that $S$ is \emph{completely semisimple}, which means that $\D=\mathcal{J}$
and if $K$ is the ideal of $S$ generated by any member of $D$ and $K'=K\setminus D$, then the principal factor $K/K'$
is completely ($0$-)simple, that is, isomorphic to a Rees matrix semigroup $\mathcal{M}(I,H,\Lambda,P)$ with sandwich
matrix $P=(p_{\lambda i})_{\Lambda\times I}$ over $H\cup\{0\}$. As is well-known, there is no loss of generality in
assuming that there is a special index $1\in I\cap\Lambda$, whereas the $\H$-class $H_{11}$ is a group ($\cong H$)
containing $e$ as its identity element. As usual, $H_{i\lambda}=R_i\cap L_\lambda$ denotes the $\H$-class situated in
row $i$ and column $\lambda$. If this class is a group, then $e_{i\lambda}$ denotes the unique idempotent of $S$ it
contains.

For a word $\ww\in E\st$, let $\ol\ww$ denote the image of $\ww$ under the canonical homomorphism of $E\st$ into $S$:
in other words, $\ol\ww$ is just the element of $S$ obtained by multiplying in $S$ the idempotents the concatenation of
which is $\ww$. We say that a system of words $\rr_\lambda,\rr'_\lambda\in E\st$, $\lambda\in\Lambda$, is a
\emph{Schreier system of representatives} for $D$ if for each $\lambda\in\Lambda$:
\begin{itemize}
\item the right multiplications by $\ol{\rr_\lambda}$ and $\ol{\rr'_\lambda}$ are mutually inverse ($\R$-class
preserving) bijections $L_1\to L_\lambda$ and $L_\lambda\to L_1$, respectively;
\item each prefix of $\rr_\lambda$ coincides with $\rr_\mu$ for some $\mu\in\Lambda$ (for example, the empty word is
just $\rr_1$).
\end{itemize}
It is well-known that such a Schreier system always exists. In the following, we assume that one particular Schreier
system has been fixed.

In addition, we will assume that a mapping $i\mapsto\lambda_i$ has been specified such that $H_{i\lambda_i}$ is a group
(that is, $p_{\lambda_ii}\neq 0$): such $\lambda_i$ must exist for each $i\in I$ by the definition of a Rees matrix
semigroup, and it will be called the \emph{anchor} for the $\R$-class $R_i$.

Finally, call a \emph{square} a quadruple of idempotents $(e,f,g,h)$ in $D$ such that
$$\begin{array}{ccc}
e & \R & f\\[1mm]
\L && \L\\[1mm]
g & \R & h.
\end{array}$$
Then there are $i,j\in I$ and $\lambda,\mu\in\Lambda$ such that $e\in H_{i\lambda}$, $f\in H_{i\mu}$, $g\in
H_{j\lambda}$ and $h\in H_{j\mu}$. For an idempotent $\varepsilon\in S$ (which instantly turns out to belong to a
$\D$-class $D'$ of $S$ such that $D'\geq D$) we say that it \emph{singularizes} the square $(e,f,g,h)$ if any of the
following two cases takes place:
\begin{itemize}
\item[(a)] $\varepsilon e=e$ and $\varepsilon g=g$, while $e=f\varepsilon$; or
\item[(b)] $e=\varepsilon g$, along with $e\varepsilon=e$ and $f\varepsilon=f$.
\end{itemize}
Note that case (a) implies $\varepsilon f=f$, $\varepsilon h=h$, $e\varepsilon=e$ and $g=g\varepsilon=h\varepsilon$,
while conditions $\varepsilon e=e$, $f=\varepsilon f=\varepsilon h$, $g\varepsilon=g$ and $h\varepsilon=h$ follow from
(b). The square $(e,f,g,h)$ is \emph{singular} if it is singularized by some idempotent of $S$.

Now $G_e$ is isomorphic to the group defined by the presentation $\langle\Gamma\pre\mathfrak{R}\rangle$, where
$$\Gamma=\{X_{i\lambda}:\ p_{\lambda i}\neq 0\}$$
(recall that the condition $p_{\lambda i}\neq 0$ is equivalent to $H_{i\lambda}$ being a group, i.e.\ to the existence
of $e_{i\lambda}$), while we distinguish three types of defining relations in $\mathfrak{R}$:
\begin{itemize}
\item[(1)] $X_{i\lambda_i}=1$ for all $i\in I$;
\item[(2)] $X_{i\lambda}=X_{i\mu}$ for all $i\in I$ and $\lambda,\mu\in\Lambda$ such that $\rr_\lambda\cdot
e_{i\mu}=\rr_\mu$;
\item[(3)] $X_{i\lambda}^{-1}X_{i\mu} = X_{j\lambda}^{-1}X_{j\mu}$ for all $i,j\in I$ and $\lambda,\mu\in\Lambda$ such
that $(e_{i\lambda},e_{i\mu},e_{j\lambda},e_{j\mu})$ is a singular square in $D$.
\end{itemize}
We refer the reader to \cite[Theorem 5]{GR1} and its extensive argumentation for more details.

\section{The Result}

As already announced, our objective here is to prove the following.

\begin{thm}\label{main}
Let $e$ be an idempotent partial transformation of $\N_n$ such that $|\im(e)|=k\leq n-2$. Then the maximal subgroup of
$\ig{E}$, where $E=E(\PT_n)$, is isomorphic to the symmetric group $\mathcal{S}_k$.
\end{thm}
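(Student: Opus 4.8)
\medskip
The plan is to apply the presentation $\langle\Gamma\pre\mathfrak{R}\rangle$ of Subsection~\ref{pres} to the $\D$-class $D_k$ of $\PT_n$ and to show that it collapses to a Coxeter presentation of $\mathcal{S}_k$, following the strategy of \cite{GR2} but taking advantage of the additional idempotents present in $\PT_n$. First I would record that $\PT_n$, being finite, is completely semisimple, so that the machinery of Subsection~\ref{pres} indeed applies. I would take the row set $I$ to be the set of all partitions of subsets $A\subseteq\N_n$ with $|A|\geq k$ into $k$ blocks, the column set $\Lambda$ to be the set of all $k$-subsets $B$ (the images), and recall that $H_{PB}$ is a group precisely when $B$ is a transversal of $P$. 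There is the canonical surjection $\psi\colon G_e\to H_e\cong\mathcal{S}_k$ induced by the homomorphism $\ig{E}\to\PT_n$, and the entire task is to prove $\psi$ injective, equivalently that $\langle\Gamma\pre\mathfrak{R}\rangle$ defines a group of order at most $k!$.

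The first substantial step, and the place where $\PT_n$ genuinely differs from $\T_n$, is a reduction lemma: every generator $X_{PB}$ whose row $P$ is a \emph{partial} partition (one whose underlying set is a proper subset of $\N_n$) is, modulo $\mathfrak{R}$, expressible through generators indexed by \emph{full} partitions of $\N_n$. The mechanism is that $\PT_n$ carries the partial-identity idempotents, and more generally the rank $k+1$ idempotents obtained by adjoining a spare point of $\N_n$ to the domain of $P$ as a new singleton block; these singularize the squares that link the row $P$ to the rows of its one-point domain extensions, so that, together with the Schreier relations (1) and (2), the generators carried by partial rows become redundant. In this way the generating set is cut down to the one supported on $D'_k=D_k\cap\T_n$, i.e.\ on the full-domain rows that already occur in $\T_n$.

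It then remains to realize the adjacent transpositions $s_1,\dots,s_{k-1}$ of $\mathcal{S}_k$ by surviving generators and to derive the Coxeter relations $s_i^2=1$, $(s_is_{i+1})^3=1$ and $s_is_j=s_js_i$ (for $|i-j|\geq 2$) from singular squares of type (3). For each relation one exhibits an explicit idempotent of rank $k+1$ (of rank $k+2$ for the braid relation) singularizing the relevant square in $D_k$; this is exactly where the hypothesis $k\leq n-2$ enters, since it guarantees the one or two spare points of $\N_n$ needed to build these higher-rank idempotents. Because every idempotent of $\T_n$ is an idempotent of $\PT_n$ with the same products, each square used for this purpose in \cite{GR2} remains singular here, so once the reduction lemma has disposed of the partial rows this computation parallels the one in \cite{GR2}. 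Verifying that the surviving generators satisfy these relations yields a surjection $\mathcal{S}_k\to G_e$, which together with $\psi$ forces $G_e\cong\mathcal{S}_k$.

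I expect the main obstacle to be the reduction lemma rather than the Coxeter computation: one must check the singularization conditions (a) and (b) for the domain-extending idempotents against a consistently chosen Schreier system and anchor map $i\mapsto\lambda_i$, and confirm that eliminating the partial rows introduces no relation beyond those already holding in $\mathcal{S}_k$. The rest is then a matter of transcribing, into the full-domain part of the presentation, the singular squares of \cite{GR2}.
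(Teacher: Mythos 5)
Your proposal is correct and follows essentially the same route as the paper: the decisive step in both is to use singular squares built from domain-extending idempotents to eliminate (via Tietze transformations and the type (1)--(3) relations) all generators indexed by proper partial partitions, after which every relation of the $\T_n$-presentation persists in the $\PT_n$-presentation and the two opposite surjections between $G_e$ and $\mathcal{S}_k$ force an isomorphism. The only differences are in packaging: the paper's Lemma \ref{L1} links a partial row to a full-domain row in a single step rather than by iterated one-point domain extensions, and it invokes Fact \ref{GR} as a black box (via $\mathfrak{R}'\subseteq\mathfrak{R}$ and a quotient argument) instead of re-running the Coxeter computation of \cite{GR2} on the surviving generators.
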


We make use of the following facts about free idempotent generated semigroups $\ig{E(S)}$ (where $S$ is arbitrary),
which are also mentioned and utilized in \cite{GR1}:
\begin{itemize}
\item[(E1)] There exists a natural (surjective) homomorphism $\phi:\ig{E(S)}\to S'$, where $S'$ denotes the subsemigroup of
$S$ generated by $E(S)$, whose restriction to $E(S)$ is the identity mapping. The basic pairs in $\ig{E(S)}$ are
exactly the same as in $S$.
\item[(E2)] $\phi$ induces a bijection between $\D$-classes of $\ig{E(S)}$ and those of $S'$. Furthermore, $\phi$ maps
the $\R$-class ($\L$-class) of $e\in E(S)$ onto the corresponding class of $e\in S'$, thus inducing a bijection between
the set of $\R$-classes ($\L$-classes) in the $\D$-class of $e$ in $\ig{E(S)}$ and the corresponding set in $S'$.
\item[(E3)] The restriction of $\phi$ onto $G_e$, the maximal subgroup of $\ig{E(S)}$ containing $e$, is a
homomorphism onto the maximal subgroup of $S'$ containing $e$.
\end{itemize}
It is a classical and celebrated result of Howie \cite{Ho1} that if $S=\T_n$, then $S'$ is the subsemigroup consisting
of all singular transformations (transformations of rank $<n$) and the identity mapping; in other words,
$S'=(\T_n\setminus\mathcal{S}_n)\cup\{\iota_n\}$. The same is true for $\PT_n$ by a result of Evseev and Podran
\cite{EP}: if $S=\PT_n$, then $S'$ includes all partial transformations of $\N_n$ except the nonidentical permutations.

Turning back to $\D$-classes $D_k$ and $D'_k$ of $\PT_n$ (i.e.\ of $\PT'_n$, provided $k<n$) and $\T_n$, respectively,
consisting of all rank $k$ members of their respective monoids, we have already remarked that in both of them, their
$\L$-classes can be indexed by the set $\LL$ of all $k$-element subsets of $\N_n$, representing the image of a partial
transformation. On the other hand, the $\R$-classes of $D_k$ can be indexed by the set $\II$ consisting of all
partitions of a subset of $\N_n$ of cardinality $\geq k$ into $k$ classes. A typical index $i\in\II$ will be of the
form $i=(A,\rho)$ where $A\subseteq\N_n$, $|A|\geq k$, and $\rho$ is an equivalence of $A$. Accordingly, $i$ is an
index of an $\R$-class contained in $D'_k$ if and only if $|A|=n$ and we will let $\II'\subseteq\II$ denote the set of
all $i\in\II$ with the latter property. Visually, one can think of $D'_k$ as a `horizontal slice' in $D_k$
corresponding to rows indexed by $\II'$. So, for any idempotent $e\in D_k$ there is a \emph{total} idempotent
transformation $e_0$ such that $e\,\D\,e_0$: just take $e_0$ to be of the same rank as $e$. Because of this easy remark
and (E2) above, there is no loss of generality in assuming that the idempotent partial transformation $e$ mentioned in
the formulation of Theorem \ref{main} is actually a total idempotent transformation of rank $k\leq n-2$; it will be
convenient for us to do so.

The key observation which eventually allows to reduce the verification of Theorem \ref{main} to Fact \ref{GR} is
contained in the next lemma.

\begin{lem}\label{L1}
Let $\alpha,\beta\in D_k$ be idempotent partial transformations such that $\alpha\,\R\,\beta$, whose domains are of
cardinality $m<n$. Then there exist total transformations $\alpha',\beta'\in D'_k$ such that
$(\alpha,\beta,\alpha',\beta')$ is a singular square of $\PT_n$.
\end{lem}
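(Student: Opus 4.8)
The plan is to construct explicitly the idempotents $\alpha',\beta'$ together with a singularizing idempotent $\varepsilon$. Since $\alpha\,\R\,\beta$ with $\alpha,\beta\in D_k$, they share a common kernel/domain: writing $\alpha=(A,\rho)$ in the row index, we have $\dom(\alpha)=\dom(\beta)=A$ with $|A|=m<n$, $\kr(\alpha)=\kr(\beta)=\rho$ (an equivalence on $A$ with $k$ classes), and the images $\im(\alpha),\im(\beta)$ are $k$-element transversals of $\rho$. The idea is to ``fill in'' the missing domain points $\N_n\setminus A$ (which is nonempty since $m<n$) to promote $\alpha,\beta$ to \emph{total} transformations $\alpha',\beta'$ of the same rank $k$, lying in $D'_k$, while keeping them $\R$-related to each other and $\L$-related to $\alpha,\beta$ respectively. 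Concretely, I would extend $\rho$ to an equivalence $\rho'$ on all of $\N_n$ with the same $k$ classes (each point of $\N_n\setminus A$ is thrown into one of the existing $\rho$-classes), and define $\alpha'$ (resp. $\beta'$) to act as the idempotent with kernel $\rho'$ and image $\im(\alpha)$ (resp. $\im(\beta)$). Then $\alpha'\,\R\,\beta'$ (same kernel $\rho'$, same total domain), $\alpha\,\L\,\alpha'$ and $\beta\,\L\,\beta'$ (same images), so $(\alpha,\beta,\alpha',\beta')$ is a genuine square in the sense required.

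The crux is then to exhibit an idempotent $\varepsilon$ of $\PT_n$ singularizing this square according to condition (a) or (b). The natural candidate is a restriction idempotent: I would take $\varepsilon$ to be the \emph{partial identity} on the set $A=\dom(\alpha)$, i.e. the idempotent with $\dom(\varepsilon)=\im(\varepsilon)=A$ acting identically on $A$. This $\varepsilon$ lies in a higher $\D$-class (rank $m>k$), as the framework anticipates, and left-multiplication by it should restrict the domain of the total maps $\alpha',\beta'$ back down to $A$, recovering $\alpha,\beta$. I expect to verify case (a): we want $\varepsilon\alpha=\alpha$, $\varepsilon\alpha'=\alpha$, and $\alpha=\beta\varepsilon$ — with the roles of the four idempotents matched to the pattern $e=\alpha$, $f=\beta$, $g=\alpha'$, $h=\beta'$. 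Since $\varepsilon$ is the identity on $A\supseteq\dom(\alpha)$, we get $\varepsilon\alpha=\alpha$ immediately, and because $\alpha'$ agrees with $\alpha$ on $A$ and $\varepsilon$ kills everything outside $A$, we get $\varepsilon\alpha'=\alpha$. The relation $\alpha=\beta\varepsilon$ (or the symmetric version in case (b)) is where one must be slightly careful: post-composition by $\varepsilon$ restricts the image back to $A$, and here I must confirm that $\im(\beta)\subseteq A$, which holds precisely because the image of $\beta$ is a transversal of $\rho$, hence a subset of $A$.

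The main obstacle, then, is the bookkeeping of which of the two singularization patterns (a) or (b) actually applies with this choice of $\varepsilon$, and ensuring all the derived equalities listed after the definition of ``singularizes'' are consistent. In particular I must double-check the asymmetry: the defining conditions treat $e,g$ (the left column) differently from $f,h$ (the right column), so I need to orient the square correctly so that the single idempotent $\varepsilon$ simultaneously acts as a left-identity for the whole left $\R$-class and encodes the $\L$-class collapse on the right. I anticipate that the partial-identity $\varepsilon$ on $A$ handles the left-multiplication conditions ($\varepsilon\alpha=\alpha$, $\varepsilon\alpha'=\alpha'$) for free, and that the right-multiplication condition $\alpha=\beta\varepsilon$ reduces exactly to the statement that restricting $\beta$'s output to $A$ returns $\alpha$ — which is automatic since both are idempotent with image inside $A$ and $\beta\varepsilon$ has the right kernel. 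Once these basic products are checked against pattern (a) (or (b), whichever the orientation forces), the remaining derived relations follow mechanically, and the lemma is established. The only genuinely delicate point is verifying that $\varepsilon$ is itself an idempotent of $\PT_n$ lying in $E(\PT_n)$ and that the resulting square lives entirely in $D_k$ for its lower row and $D'_k$ for its upper row — but this is immediate from the construction since ranks are preserved.
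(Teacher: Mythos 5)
Your construction of the square itself matches the paper's in substance (extend $\rho$ to an equivalence $\rho'$ on all of $\N_n$ with the same $k$ classes and take the corresponding total idempotents with images $\im(\alpha)$, $\im(\beta)$), but your singularizing idempotent is genuinely different: the paper uses the total map $\varepsilon$ defined by $i\varepsilon=x\alpha$ whenever $i=x\beta$ with $x\in A$ and $i\varepsilon=i$ otherwise, which realizes the column collapse $\beta\mapsto\alpha$ by \emph{right} multiplication and verifies pattern (a), whereas your partial identity on $A$ realizes the row collapse $\alpha'\mapsto\alpha$ by \emph{left} multiplication. Your choice is arguably the more natural one here, since the two rows of the square differ exactly by domain restriction. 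However, your verification as written is wrong in a specific way: with $\varepsilon$ the partial identity on $A$ you claim $\varepsilon\alpha'=\alpha'$ and $\alpha=\beta\varepsilon$, and neither holds. Since composition is left to right, $\varepsilon\alpha'$ has domain $A$ and therefore equals $\alpha$, not the total map $\alpha'$; and $\beta\varepsilon=\beta\neq\alpha$ in general, because $\im(\beta)\subseteq A$ means post-composing with the identity on $A$ changes nothing. So pattern (a), whose defining conditions are $\varepsilon e=e$, $\varepsilon g=g$ and $e=f\varepsilon$, \emph{fails} for your $\varepsilon$.

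The fix is immediate and you half-anticipate it: your $\varepsilon$ singularizes the square via pattern (b), not (a). Taking $e=\alpha$, $f=\beta$, $g=\alpha'$, $h=\beta'$, pattern (b) requires $e=\varepsilon g$, $e\varepsilon=e$ and $f\varepsilon=f$, i.e.\ $\alpha=\varepsilon\alpha'$, $\alpha\varepsilon=\alpha$ and $\beta\varepsilon=\beta$. The first holds because $\varepsilon\alpha'$ restricts $\alpha'$ to domain $A$, where it agrees with $\alpha$; the other two hold because $\im(\alpha)$ and $\im(\beta)$ are contained in $A=\dom(\beta)$ (both maps being idempotent), so right multiplication by the partial identity on $A$ fixes them. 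With that correction your argument is complete, and the derived consequences of (b) listed in the paper ($\varepsilon\alpha=\alpha$, $\beta=\varepsilon\beta=\varepsilon\beta'$, $\alpha'\varepsilon=\alpha'$, $\beta'\varepsilon=\beta'$) all check out as well. The lesson is that the two singularization patterns are not interchangeable: (a) is the one where $\varepsilon$ acts as a left identity on the whole square and collapses columns on the right, while (b) is the one where $\varepsilon$ acts as a right identity and collapses rows on the left; a domain-restriction idempotent can only do the latter.
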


\begin{proof}
By the given conditions we have $\dom(\alpha)=\dom(\beta)=A$, $|A|=m$, and $\kr(\alpha)=\kr(\beta)=\rho$. Let
$C=\N_n\setminus A$ and fix an element $a_0\in A$ in an arbitrary way. We extend the (properly partial) transformations
$\alpha$ and $\beta$ to total transformations $\alpha'$ and $\beta'$, respectively, by defining $c\alpha'=a_0\alpha$
and $c\beta=a_0\beta'$ for all $c\in C$ and claim that these have the required properties.

First of all, note that if $\rho'$ denotes the equivalence of $\N_n$ obtained as an extension of $\rho$ by collapsing
$a_0/\rho$ and $C$ into a single $\rho'$-class (while all other $\rho$-classes remain the same, i.e.\ they are
$\rho'$-classes as well), then $\ker(\alpha')=\ker(\beta')=\rho'$. Hence, $\alpha'\,\R\,\beta'$. In addition, it is
obvious, by the very definition of $\alpha'$ and $\beta'$, that $\im(\alpha')=\im(\alpha)$ and
$\im(\beta')=\im(\beta)$, so $\alpha'\,\L\,\alpha$ and $\beta'\,\L\,\beta$. (In fact, it is quite straightforward to
verify that $\{\alpha,\beta,\alpha',\beta'\}$ forms a $2\times 2$ rectangular band.)

It remains to find an idempotent in $\mathcal{PT}_n$ (of rank $\geq k$) which singularizes this square. To this end,
define $\varepsilon \in \mathcal{PT}_n$ by
$$
i \varepsilon =
\begin{cases}
x \alpha & \mbox{if $i = x\beta$ where $x \in A$,} \\[1mm]
i & \mbox{otherwise}.
\end{cases}
$$
Obviously, $\varepsilon$ is a well-defined transformation, as $x \beta = y\beta$ implies $(x,y)\in\kr(\beta) =
\kr(\alpha)$ and thus $x\alpha = y\alpha$. To see that $\varepsilon^2 = \varepsilon$ observe that for all $x \in A$
$$
(x \alpha) \varepsilon =
\begin{cases}
(y \alpha) \alpha & \mbox{if $x \alpha = y\beta$ where $y \in A$,} \\[1mm]
x \alpha  & \mbox{otherwise}.
\end{cases}
$$
But if $x \alpha = y\beta$ then since $\alpha\,\R\,\beta$ and $\alpha^2 = \alpha$ it follows that $(y \alpha) \alpha =
y \alpha =  y (\beta \alpha) = (y \beta) \alpha = x \alpha \alpha = x \alpha$, and hence $\varepsilon^2 = \varepsilon$.
Since $\beta$ is an idempotent it follows that $\im(\beta) \subseteq \dom(\beta)=A$. Now $\varepsilon \alpha' =
\alpha'$ since for all $i \in \N_n$
\begin{align*}
i \varepsilon \alpha' & =
\begin{cases}
(x \alpha) \alpha' & \mbox{if $i = x \beta$ where $x \in A$,} \\[1mm]
i \alpha' & \mbox{otherwise}.
\end{cases} & & \\[2mm]
& =
\begin{cases}
x \alpha & \mbox{if $i = x \beta$  where $x \in A$,} \\[1mm]
i \alpha' & \mbox{otherwise}
\end{cases}
& & \mbox{(since $\alpha\,\L\,\alpha'$)}
\\[2mm]
& =
\begin{cases}
x \beta \alpha & \mbox{if $i = x \beta$  where $x \in A$,} \\[1mm]
i \alpha' & \mbox{otherwise}
\end{cases}
& & \mbox{(since $\alpha\,\R\,\beta$)} \\[2mm]
& = \begin{cases}
i \alpha & \mbox{if $i = x \beta$  where $x \in A$,} \\[1mm]
i \alpha' & \mbox{otherwise}
\end{cases}
& & \\[2mm]
& = i \alpha' & & \mbox{(since $x \beta \in A$).}
\end{align*}
Further routine calculations show $\beta \varepsilon = \alpha \varepsilon = \alpha$ and $\varepsilon \alpha = \alpha$.
Hence, $(\alpha, \beta, \alpha', \beta')$ is singularized by $\varepsilon$ (see case (a)).
%
\end{proof}

The following is immediate.

\begin{lem}\label{L2}
Let $\alpha,\beta,\gamma,\delta$ be total transformations of rank $k$ forming a square $(\alpha,\beta,\gamma,\delta)$
which is singularized in $\T_n$. Then it is a singular square in $\PT_n$ as well.
\end{lem}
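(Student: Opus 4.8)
The plan is to exploit the fact that $\T_n$ sits inside $\PT_n$ as a subsemigroup: every total transformation is a particular partial transformation, and the product of two total transformations computed in $\T_n$ coincides with their product computed in $\PT_n$. Consequently, both the configuration of the square and the singularization conditions, being expressed purely in terms of Green's relations and of products of the idempotents involved, should transfer verbatim from $\T_n$ to $\PT_n$.

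First I would check that $(\alpha,\beta,\gamma,\delta)$ is still a square when regarded in $\PT_n$, i.e.\ that the relations $\alpha\,\R\,\beta$, $\gamma\,\R\,\delta$, $\alpha\,\L\,\gamma$ and $\beta\,\L\,\delta$ persist. Here one invokes the explicit descriptions of Green's relations recalled in the preliminary section: in both $\T_n$ and $\PT_n$, the relation $\R$ amounts to equality of kernels (together with equality of domains) and $\L$ to equality of images. Since $\alpha,\beta,\gamma,\delta$ are all total, their domains coincide (being $\N_n$), so these characterizations agree on total transformations; hence the $\R$- and $\L$-relations holding among the four idempotents in $\T_n$ continue to hold in $\PT_n$.

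Next, let $\varepsilon\in\T_n$ be an idempotent singularizing the square in $\T_n$, say according to case (a) (case (b) being entirely analogous). Then $\varepsilon$ is also an idempotent of $\PT_n$, and the defining equalities $\varepsilon\alpha=\alpha$, $\varepsilon\gamma=\gamma$ and $\alpha=\beta\varepsilon$ are equations between products of elements of $\T_n$. Because these products are the same whether evaluated in $\T_n$ or in $\PT_n$, the very same equalities hold in $\PT_n$, so $\varepsilon$ singularizes $(\alpha,\beta,\gamma,\delta)$ there as well. The side condition that the $\D$-class of $\varepsilon$ lie above $D'_k$ need not be verified separately, as the excerpt observes it is automatic; in any case $\varepsilon$ has rank $\geq k$ in $\PT_n$ too, and ranks order the $\D$-classes of $\PT_n$.

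There is essentially no real obstacle: the entire content is the observation that squares and their singularizations are notions defined through Green's relations and the semigroup multiplication, both of which are inherited by total transformations from $\T_n$ to $\PT_n$. The only point demanding a moment's care is confirming that the $\R$- and $\L$-relations restricted to total transformations genuinely coincide in the two monoids, which is immediate from the characterizations recalled earlier. This is why the statement can be asserted as being immediate.
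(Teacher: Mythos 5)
Your argument is correct and is essentially the paper's own proof: the paper simply notes that $\T_n$ is a subsemigroup of $\PT_n$, so a singularizing idempotent in $\T_n$ still singularizes the square in $\PT_n$. Your additional check that the Green's relations among the four total idempotents persist in $\PT_n$ is a harmless (and valid) elaboration of the same observation.
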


\begin{proof}
This is trivial, since $\T_n$ is a subsemigroup of $\PT_n$; thus if $\varepsilon$ is a (total) transformation on $\N_n$
that singularizes the considered square in $\T_n$, it also singularizes the square in $\PT_n$.
\end{proof}

\begin{lem}\label{L3}
Let $\rr_\lambda,\rr'_\lambda\in E\st$, $E=E(\T_n)$, $\lambda\in\LL$, be a Schreier system of representatives for
$D'_k$ in $\T_n$. Then this is also a Schreier system of representatives for $D_k$ in $\PT_n$.
\end{lem}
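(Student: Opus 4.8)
The plan is to show that the two defining properties of a Schreier system survive the passage from $\T_n$ to $\PT_n$, exploiting the fact that in both monoids the $\L$-classes of the relevant $\D$-class are indexed by the same set $\LL$ of $k$-element subsets of $\N_n$ (the image of a transformation), and that each $\ol{\rr_\lambda}$ and $\ol{\rr'_\lambda}$ is one and the same total transformation, viewed inside $\T_n\subseteq\PT_n$. The words $\rr_\lambda,\rr'_\lambda$ are built from $E(\T_n)\subseteq E(\PT_n)$, so they are legitimate words over $E(\PT_n)$ as well.

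First I would dispose of the prefix condition: it is a purely combinatorial assertion about the words $\rr_\lambda$ and their labelling by $\LL$, and since neither the words nor the indexing set change, it transfers verbatim. The substance lies in the bijection requirement. Fix $\lambda$, write $B_\mu\in\LL$ for the $k$-subset indexing the $\L$-class $L_\mu$, and put $\theta=\ol{\rr_\lambda}$, $\theta'=\ol{\rr'_\lambda}$; let $L_\mu^{\T}$ and $L_\mu^{\PT}$ denote the $\L$-class labelled $\mu$ computed in $\T_n$ and in $\PT_n$ respectively, the latter being obtained from the former by adjoining the properly partial rank-$k$ maps with image $B_\mu$.

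The heart of the argument is that right multiplication by a \emph{total} transformation reads only the image of its argument and leaves the domain intact. Concretely, I would check three points. (i) For any $\sigma$ with $\im(\sigma)=B_1$ one has $\im(\sigma\theta)=B_1\theta=B_\lambda$, so right multiplication by $\theta$ sends $L_1^{\PT}$ into $L_\lambda^{\PT}$; this uses only that $\theta$ carries $B_1$ onto $B_\lambda$, already known in $\T_n$. (ii) Since $\theta$ is total, $\dom(\sigma\theta)=\dom(\sigma)$, and $\kr(\sigma\theta)=\kr(\sigma)$ exactly when $\theta$ is injective on $\im(\sigma)=B_1$; this injectivity depends only on $B_1$ and so holds in $\PT_n$ precisely because it holds in $\T_n$, whence $\sigma\,\R\,\sigma\theta$ for every $\sigma\in L_1^{\PT}$, total or not. (iii) From the $\T_n$ Schreier property, $\sigma\theta\theta'=\sigma$ for every total $\sigma\in L_1^{\T}$; as such $\sigma$ is surjective onto $B_1$, letting $x$ range over $\N_n$ forces $\theta\theta'$ to fix $B_1$ pointwise. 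Then for an arbitrary, possibly partial $\sigma\in L_1^{\PT}$ and each $x\in\dom(\sigma)$ we get $x(\sigma\theta\theta')=(x\sigma)\theta\theta'=x\sigma$, while $\dom(\sigma\theta\theta')=\dom(\sigma)$, so $\sigma\theta\theta'=\sigma$ throughout $L_1^{\PT}$; the symmetric computation gives $\tau\theta'\theta=\tau$ on $L_\lambda^{\PT}$. Together these show that right multiplications by $\theta$ and $\theta'$ are mutually inverse, $\R$-class preserving bijections $L_1^{\PT}\leftrightarrow L_\lambda^{\PT}$.

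The one place demanding care --- and the step I expect to be the main obstacle --- is that the $\L$-class $L_\lambda$ strictly enlarges on passing to $\PT_n$, so a priori the bijections that worked on the total classes could fail to be onto, or could cease to preserve $\R$-classes, once the new partial maps are included. The resolution is exactly the uniformity noted above: landing in the correct column, preserving the kernel, and the composite acting as the identity are all controlled solely by the common image $B_1$ and by the totality of $\theta$, none of which is sensitive to whether the argument is total or properly partial. Once this observation is isolated, the remainder is routine bookkeeping.
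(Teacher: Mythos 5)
Your argument is correct, and it reaches the same conclusion by the same overall strategy (the prefix condition is untouched; the substance is extending the bijections from the total $\L$-classes to the enlarged ones), but the mechanism you use for the extension differs from the paper's. The paper observes that the $\T_n$ Schreier property already gives $e\,\ol{\rr_\lambda}\,\R\,e$ and $e\,\ol{\rr_\lambda}\,\ol{\rr'_\lambda}=e$, and then invokes Green's Lemma once: in \emph{any} semigroup, these two facts force the right translations by $\ol{\rr_\lambda}$ and $\ol{\rr'_\lambda}$ to restrict to mutually inverse, $\R$-class preserving bijections between the full $\L$-classes $L_1$ and $L_\lambda$ of $\PT_n$. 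You instead unpack this by hand in the concrete combinatorics of $\PT_n$: totality of $\ol{\rr_\lambda}$ preserves domains, injectivity on $B_1$ (extracted from the rank computation in $\T_n$) preserves kernels, and surjectivity of any element of $L'_1$ onto $B_1$ forces $\ol{\rr_\lambda}\,\ol{\rr'_\lambda}$ to fix $B_1$ pointwise, whence the composite is the identity on all of $L_1$ in $\PT_n$. Your version is more elementary and self-contained, and it makes visible exactly \emph{why} the extension to properly partial maps costs nothing (everything is controlled by the common image and the totality of the representatives); the paper's version is shorter and generalizes verbatim to any oversemigroup containing the given $\L$-classes, since Green's Lemma does not care about the specific model. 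The one point worth making explicit in your write-up is the existence of a total element in each of $L'_1$ and $L'_\lambda$ (needed to extract the pointwise identities on $B_1$ and $B_\lambda$ and the injectivity of $\ol{\rr_\lambda}$ on $B_1$); this is immediate because these are genuine $\L$-classes of $D'_k$ in $\T_n$, but it is the hypothesis your computation secretly leans on.
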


\begin{proof}
For any $\lambda\in\LL$, let $L_\lambda$ be the $\L$-class of $D_k$ indexed by $\lambda$; then
$L'_\lambda=L_\lambda\cap D'_k$ is the corresponding $\L$-class of $\T_n$ contained in $D'_k$. By the given condition,
the right multiplication mappings by $\ol{\rr_\lambda}$ and $\ol{\rr'_\lambda}$ induce mutually inverse bijections
$L'_1\to L'_\lambda$ and $L'_\lambda\to L'_1$, respectively. In particular, $e\ol{\rr_\lambda}\,\R\,e$ and
$e\ol{\rr_\lambda}\ol{\rr'_\lambda}=e$. By Green's Lemma (see e.g.\ \cite[Lemma 2.2.1]{How}) the right translations by
$\ol{\rr_\lambda}$ and $\ol{\rr'_\lambda}$ considered above extend to ($\R$-class preserving) mutually inverse
bijections $L_1\to L_\lambda$ and $L_\lambda\to L_1$, respectively. The condition that the language $\{\rr_\lambda:\
\lambda\in\LL\}$ is prefix-closed stands unaltered, thus the lemma follows.
\end{proof}

Finally, we notice a consequence of the property (E3) and the characterization of group $\H$-classes in $\PT_n$ (see
\cite{GM}).

\begin{lem}\label{L4}
Let $e\in E(\PT_n)$ be of rank $k<n$. Then there is a surjective homomorphism from $G_e$, the maximal subgroup of
$\ig{E(\PT_n)}$ containing $e$, onto the symmetric group $\mathcal{S}_k$.
\end{lem}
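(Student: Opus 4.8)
The plan is to read this off directly from property (E3) together with the known structure of group $\H$-classes in $\PT_n$; the paper itself flags the statement as ``immediate'', so the work is essentially a matter of correctly identifying the target group. First I would invoke (E3): the canonical homomorphism $\phi:\ig{E(\PT_n)}\to S'$, where $S'=\langle E(\PT_n)\rangle$, restricts to a surjective homomorphism from $G_e$ onto the maximal subgroup of $S'$ containing $e$. Hence it suffices to show that this latter group is isomorphic to $\mathcal{S}_k$, and the surjection demanded by the lemma is then just $\phi|_{G_e}$.

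Next I would identify that maximal subgroup. Since $e$ is an idempotent, its $\H$-class $H_e$ in $\PT_n$ is a group, and by the characterization recalled in Section~2 (the image of $e$ is a transversal of $\kr(e)$), $H_e$ is isomorphic to the symmetric group on the $k$-element set $\im(e)$, i.e.\ to $\mathcal{S}_k$. What must be checked is that the maximal subgroup of $S'$ through $e$ is all of $H_e$, rather than some proper subgroup: concretely, that $H_e\subseteq S'$. Granting this, $H_e$ is a subgroup of $S'$ with identity $e$, and since every subgroup of $S'$ with identity $e$ lies inside the $\H$-class of $e$ in $\PT_n$, namely $H_e$, the group $H_e\cong\mathcal{S}_k$ is exactly the maximal subgroup of $S'$ containing $e$.

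The one point requiring attention—though it is no real obstacle—is the containment $H_e\subseteq S'$. Here I would quote the Evseev--Podran description: $S'$ consists of every partial transformation of $\N_n$ except the nonidentity permutations. Every element of $H_e$ has rank $k\leq n-2<n$, hence is not a permutation of $\N_n$ at all, so lies in $S'$; thus $H_e\subseteq S'$. Combining this with the first two steps yields the surjective homomorphism $\phi|_{G_e}\colon G_e\to\mathcal{S}_k$, as required.
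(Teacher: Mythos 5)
Your proposal is correct and follows the same route as the paper: apply (E3), then observe that since $k<n$ the group $\H$-class $H_e\cong\mathcal{S}_k$ of $\PT_n$ lies entirely in $\PT_n'=\langle E(\PT_n)\rangle$ by the Evseev--Podran description, so it is also the maximal subgroup of $\PT_n'$ at $e$. (One trivial slip: you write that elements of $H_e$ have rank $k\leq n-2$, but the lemma only assumes $k<n$; this is harmless since $k<n$ already rules out their being permutations.)
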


\begin{proof}
Since $k<n$, the maximal subgroup of $\PT'_n$ containing $e$ coincides with the corresponding maximal subgroup of
$\PT_n$. Since the latter is known to be isomorphic to $\mathcal{S}_k$, the rest follows by (E3).
\end{proof}

It remains to put the available pieces together.

\begin{proof}[Proof of Theorem \ref{main}]
As already argued, we may assume that $e$ is an idempotent total transformation of $\N_n$ of rank $k$ belonging to the
$\H$-class $H_{11}\subseteq D_k$ of $\PT_n$. Let $G_e$ be the maximal subgroup of $\ig{E(\PT_n)}$ containing $e$, and
let $G'_e$ be the corresponding subgroup of $\ig{E(\T_n}$. The general construction in Subsection \ref{pres} yields
presentations for these two groups: $G_e=\langle\Gamma\pre\mathfrak{R}\rangle$ and
$G'_e=\langle\Gamma'\pre\mathfrak{R}'\rangle$. Both sets of defining relations can be in a natural way partitioned as
$\mathfrak{R}=\mathfrak{R}_1\cup\mathfrak{R}_2\cup\mathfrak{R}_3$ and
$\mathfrak{R}'=\mathfrak{R}'_1\cup\mathfrak{R}'_2\cup\mathfrak{R}'_3$ according to their `type' (1)--(3).

First of all, the fact that $\T_n$ is a submonoid of $\PT_n$ yields that $\Gamma'\subseteq\Gamma$. Moreover, note that
in the process of choosing anchor coordinates $\lambda_i\in\LL$ for each $i\in\II$, we can do it in two steps: first we
can fix a choice of anchors $\lambda_i$ for $i\in\II'$ (which are denoted $A(P)$ and concretely specified in \cite{GR2}
for an arbitrary partition $P\in\II'$ of $\N_n$), use it for constructing the presentation
$\langle\Gamma'\pre\mathfrak{R}'\rangle$ and then extend this mapping $i\mapsto\lambda_i$ to the whole of $\II$ in
order to obtain the presentation $\langle\Gamma\pre\mathfrak{R}\rangle$. In this way, we can make sure that
$\mathfrak{R}'_1\subseteq\mathfrak{R}_1$. Furthermore, Lemma \ref{L3} implies that
$\mathfrak{R}'_2\subseteq\mathfrak{R}_2$, while Lemma \ref{L2} yields $\mathfrak{R}'_3\subseteq\mathfrak{R}_3$.
Therefore, $\mathfrak{R}'\subseteq\mathfrak{R}$.

We proceed by showing that one can apply Tietze transformations to the presentation
$\langle\Gamma\pre\mathfrak{R}\rangle$ to the effect of removing all the generators from $\Gamma\setminus\Gamma'$.
Namely, let $i\in\II\setminus\II'$ and $\lambda\in\LL$ be arbitrary such that $H_{i\lambda}$ contains an idempotent
partial transformation $\beta$ (which means that $X_{i\lambda}\in\Gamma$). By construction, $H_{i\lambda_i}$ also
contains an idempotent which we denote by $\alpha$. Now Lemma \ref{L1} supplies an index $j\in\II'$ and idempotent
total transformations $\alpha'\in H_{j\lambda_i}$, $\beta'\in H_{j\lambda}$ of $\N_n$ such that
$(\alpha,\beta,\alpha',\beta')$ is a singular square in $D_k$. (To be quite precise, we do not assume that $\alpha\neq
\beta$: if $\alpha=\beta$, then accordingly $\alpha'=\beta'$, and the whole subsequent argument holds, although in a
trivial manner.) Therefore,
$$X_{i\lambda_i}^{-1}X_{i\lambda} = X_{j\lambda_i}^{-1}X_{j\lambda}$$
is a relation from $\mathfrak{R}$ (or a trivial relation), while $X_{i\lambda_i}=1$ is a relation of type (1) belonging
to $\mathfrak{R}$. Combining the two yields
$$X_{i\lambda} = X_{j\lambda_i}^{-1}X_{j\lambda},$$
which means that each generator $X_{i\lambda}$ such that $i\not\in\II'$ can be expressed in terms of generators whose
second indices do belong to $\II$. As desired, this renders all the generators from $\Gamma\setminus\Gamma'$ redundant,
while possibly creating some new defining relations in addition to those from $\mathfrak{R}$, resulting in a
presentation of the form $\langle \Gamma'\pre\mathfrak{R}''\rangle$ for $G_e$, where
$\mathfrak{R}''\supseteq\mathfrak{R}$.

Since $\mathfrak{R}\supseteq\mathfrak{R}'$, it follows that $G_e$ is a homomorphic image of $G'_e$, a group presented
by $\langle\Gamma'\pre\mathfrak{R}'\rangle$. By the Gray-Ru\v skuc result (Fact \ref{GR}), $G'_e\cong\mathcal{S}_k$, so
there exists a surjective group homomorphism $\xi:\mathcal{S}_k\to G_e$. However, by Lemma \ref{L4}, there is a
surjective homomorphism $\eta:G_e\to\mathcal{S}_k$. This is possible if and only if both $\xi,\eta$ are in fact
isomorphisms, thus $G_e\cong\mathcal{S}_k$, as wanted.
\end{proof}

The cases $k\in\{0,n-1,n\}$ which remain outside of the scope of Theorem \ref{main} are easy to discuss. In both the
cases $k=0$ and $k=n$, the class $D_k$ contains a single idempotent (the identity mapping $\iota_n$ and the empty set,
respectively), resulting in a singleton $\D$-class of $\ig{\PT_n}$ and a trivial maximal subgroup. Finally, for $k=n-1$
neither the class $D_{n-1}$ contains any $2\times 2$ squares (which is easy to verify), nor there are any idempotents
in $D_{n-1}\cup D_n$ available for singularization. Hence, the presentation of the corresponding maximal subgroup
consists only of relations of type (1) and (2), implying the resulting group is free.

\section*{Acknowledgements}
The author gratefully acknowledges the support of the Ministry of Education and Science of the Republic of Serbia,
through Grant No.174019, and of the Secretariat of Science and Technological Development of the Autonomous Province of
Vojvodina (Grant contract 114--451--2002/2011). Also, I am indebted to the anonymous referee for a number of useful
suggestions that improved the article.


\end{document}